\newtheorem{theorem}{Theorem}[section]
\newtheorem{prop}[theorem]{Proposition}
\newtheorem{lem}[theorem]{Lemma}
\newtheorem{cor}[theorem]{Corollary}
\newtheorem{observation}[theorem]{Observation}
\newtheorem{conj}[theorem]{Conjecture}
\newtheorem{rem}[theorem]{Remark}
\newlist{problems}{enumerate}{10}
\setlist[problems]{label*=\arabic*}
\crefname{problemsi}{problem}{problems}
\Crefname{problemsi}{Problem}{Problems}
\begin{document}

\title{On the number of $4$-cycles in a tournament}

\author{Nati Linial\thanks{School of Computer Science and engineering, The Hebrew University of Jerusalem, Jerusalem 91904, Israel. Email: {\tt nati@cs.huji.ac.il}. Research supported in part by grant 1169/14 of the Israel Science Foundation.}\and Avraham Morgenstern\thanks{Einstein Institute of mathematics, The Hebrew University of Jerusalem, Jerusalem 91904, Israel. Email: {\tt avraham.morgenstern@mail.huji.ac.il}}}


\maketitle

\setcounter{page}{1}

\vspace{-2em}

\begin{abstract}

If $T$ is an $n$-vertex tournament with a given number of $3$-cycles, what can be said about the number of its $4$-cycles? The most interesting range of this problem is where $T$ is assumed to have $c\cdot n^3$ cyclic triples for some $c>0$ and we seek to minimize the number of $4$-cycles. We conjecture that the (asymptotic) minimizing $T$ is a random blow-up of a constant-sized transitive tournament. Using the method of flag algebras, we  derive a lower bound that almost matches the conjectured value. We are able to answer the easier problem of maximizing the number of $4$-cycles. These questions can be equivalently stated in terms of transitive subtournaments. Namely, given the number of  transitive triples in $T$, how many transitive quadruples can it have? As far as we know, this is the first study of inducibility in tournaments.

\end{abstract}

\section{Introduction and notation}\label{sect:1}

\subsection{Notation}

For tournaments $T,H$, let $\mathbf{pr}(H,T)$ be the probability that a random set of $|H|$ vertices in $T$ spans a subtournament isomorphic to $H$. For an infinite family of tournaments $\cal T$, let $\mathbf{pr}(H,{\cal T})=\lim_{T\in{\cal T}, |T|\to\infty}\mathbf{pr}(H,T)$, assuming the limit exists. (Nonexistence of the limit may be repaired, of course, by passing to an appropriate subfamily).

We denote the transitive $m$-vertex tournament by $T_m$, and the $3$-vertex cycle by $C_3$. There are four isomorphism types of $4$-vertex tournaments, see~\Cref{fig:5}.

\begin{itemize}

\item

$C_4$ which is characterized by having a directed $4$-cycle,

\item

The transitive $T_4$,

\item

$W$, a cyclic triangle and a sink,

\item

$L$, a cyclic triangle and a source.

\end{itemize}

\begin{figure}

  \centering

  \includegraphics[width=160mm,height=40mm]{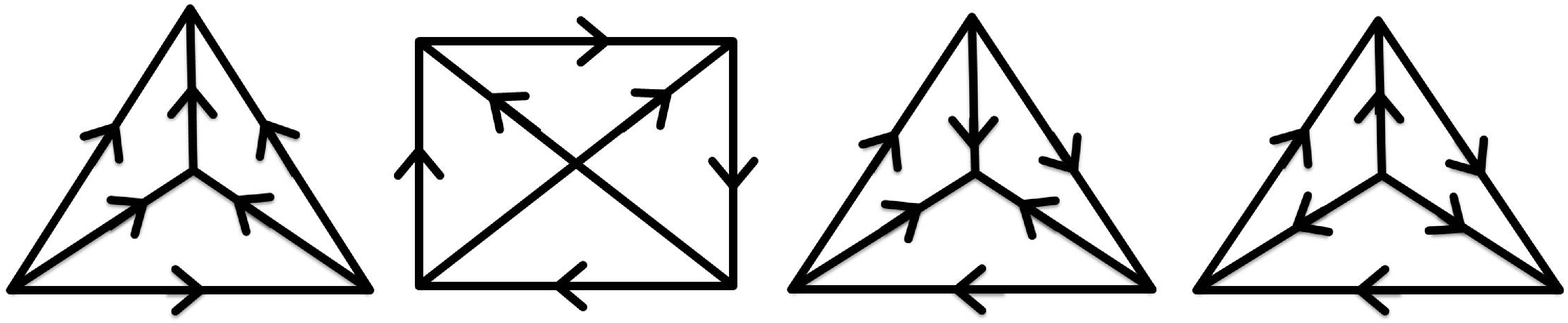}

  \caption{$T_4,C_4,W,L$}

  \label{fig:5}

\end{figure}

Denote $r({\cal T})=\mathbf{pr}(R,{\cal T})$ for any letter $r\in\{c_3,c_4,t_k,w,l\}$ (e.g., $c_3({\cal T})$ is the limit proportion of cyclic triangles in members of $\cal T$). We omit $\cal T$ when appropriate. We will always restrict ourselves to families for which all the relevant limits exist, though we do not bother to mention this any further.

In~\cite{LM} we initiated the study of $4$-local profiles of tournaments, namely the set $${\cal P}=\{(t_4({\cal T}),c_4({\cal T}),w({\cal T}),l({\cal T})) | {\cal T}\text{ is a family of tournaments for which all the limits exist}\}\subseteq\mathbb{R}^4.$$

Here we continue with these investigations.

\subsection{Our questions}

In studying the set $\cal P$ of $4$-local profiles of tournaments, it is of interest to understand its projection to the first two coordinates, which raises~\Cref{it:c,it:f} below. We are, in fact, interested in all the following six problems, but as we show below, they are interdependent.

\begin{problems}

\item\label{it:a} Maximize $c_4({\cal T})$ when $c_3({\cal T})$ is set.

\item\label{it:b} Maximize $t_4({\cal T})$ when $t_3({\cal T})$ is set.

\item\label{it:c} Maximize $c_4({\cal T})$ when $t_4({\cal T})$ is set.

\item\label{it:d} Minimize $c_4({\cal T})$ when $c_3({\cal T})$ is set.

\item\label{it:e} Minimize $t_4({\cal T})$ when $t_3({\cal T})$ is set.

\item\label{it:f} Minimize $c_4({\cal T})$ when $t_4({\cal T})$ is set.

\end{problems}

\begin{prop}\label{prop:equiv}

\Cref{it:a,it:b,it:c} are equivalent in the sense that the solution of any one of them can be transformed into a solution for the other two. Likewise \Cref{it:d,it:e,it:f} are equivalent.

\end{prop}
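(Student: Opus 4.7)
The plan is to derive a linear identity among $c_3$, $t_4$, and $c_4$ by double-counting triples within four-vertex subtournaments, and then use this identity together with $t_3 = 1 - c_3$ to transform each of the six problems explicitly into the other two in its triple. Concretely, I would count the number of transitive (respectively, cyclic) triples contained in each isomorphism type of four-vertex tournament---$(4,0)$ for $T_4$, $(2,2)$ for $C_4$, $(3,1)$ for $W$, and $(3,1)$ for $L$---and then count incidences (triple, four-subset) in two ways, noting each triple lies in exactly $n-3$ four-subsets. This yields
\[4c_3 = 2c_4 + (w+l), \qquad 4t_3 = 4t_4 + 3(w+l) + 2c_4,\]
and eliminating $w+l$ while using $t_3 = 1 - c_3$ gives the key identity
\[c_4 = t_4 + 4c_3 - 1. \tag{$\star$}\]

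From $(\star)$, the equivalence \Cref{it:a}$\iff$\Cref{it:b} is immediate: fixing $c_3$ is the same as fixing $t_3$, and on that slice $c_4$ and $t_4$ differ by the constant $4c_3 - 1$, so the same tournaments maximize both. Explicitly, $F_a(c_3) = F_b(1-c_3) + 4c_3 - 1$, where $F_a, F_b$ denote the respective extremal values.

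For \Cref{it:a}$\iff$\Cref{it:c}, define $\Phi(c_3) := F_a(c_3) + 1 - 4c_3$, which by $(\star)$ equals the maximum $t_4$ attainable at the given $c_3$. The key analytic step is to show that $\Phi$ is strictly monotone decreasing. Granted this, for $y$ in the range of $\Phi$, inversion yields $\Phi^{-1}(y) = \max\{c_3 : t_4 = y \text{ is achievable}\}$, and so $F_c(y) = y - 1 + 4\Phi^{-1}(y)$; for the remaining range of $y$, the maximum $c_3$ equals the trivial bound $c_3^{\max}$ and the formula simplifies accordingly. This provides the transformation from the solution of (a) to that of (c), with the reverse obtained by inverting $\Phi^{-1}$. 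The triple \Cref{it:d}, \Cref{it:e}, \Cref{it:f} is handled identically, using the lower envelope in place of $\Phi$.

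The main obstacle is establishing the strict monotonicity of $\Phi$. I would prove this by constructing, for any tournament $T$ with $c_3(T) = x > 0$, the disjoint union $T \oplus T_k$, where $T_k$ is the transitive tournament on $k$ new vertices and all cross-edges are oriented from $T$ to $T_k$. A direct density calculation shows that $c_3(T \oplus T_k)$ strictly decreases in $k$, while $t_4(T \oplus T_k)$ strictly increases---indeed, adjoining a single sink replaces $t_4$ by a convex combination of $t_4$ and $t_3$, which is strictly larger since $(\star)$ gives $t_3 - t_4 = \tfrac{3}{4}(w+l) + \tfrac{1}{2}c_4 > 0$ for any non-transitive tournament. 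Taking appropriate limits in $|T|$ and $k$, one realizes every $c_3' \in (0, x)$ and witnesses $\Phi(c_3') > \Phi(x)$, giving strict monotonicity; an analogous construction with sources in place of sinks handles the lower envelope needed for \Cref{it:d}, \Cref{it:e}, \Cref{it:f}.
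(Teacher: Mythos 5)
Your derivation of the linear identity $(\star)$ is exactly the paper's Observation 1.2, obtained by the same double-counting of triples inside the four 4-vertex tournament types, and your handling of the equivalences $\Cref{it:a}\iff\Cref{it:b}$ and $\Cref{it:d}\iff\Cref{it:e}$ matches the paper's argument. The interesting point of comparison is the inversion step ($\Cref{it:b}\iff\Cref{it:c}$ and $\Cref{it:e}\iff\Cref{it:f}$): the paper simply \emph{asserts} that ``minimize $t_4$ given $t_3$'' is the same as ``maximize $t_3$ given $t_4$'' without justification, implicitly relying on the boundary curve being monotone, whereas you explicitly try to supply a proof of strict monotonicity so that the envelope can be inverted. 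For the ``max'' triple your argument is sound: adjoining a transitive sink block strictly decreases $c_3$ and strictly increases $t_4$ (indeed a sink replaces $t_4$ by a convex combination of $t_4$ and $t_3$, and $t_3-t_4=\tfrac{1}{2}c_4+\tfrac{3}{4}(w+l)>0$ unless $T$ is transitive), and sweeping the block size traces a continuous path from $(c_3(T),t_4(T))$ down to $(0,1)$, showing the upper envelope $\Phi(c_3)=\max t_4$ is strictly decreasing and hence invertible. This is more rigorous than the paper's one-line assertion.

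However, the treatment of the ``min'' triple has a genuine gap, and the claimed ``analogous construction with sources'' does not work. Adjoining a source has \emph{exactly the same} effect on the pair $(c_3,t_4)$ as adjoining a sink: both decrease $c_3$ and increase $t_4$. (This is forced by the edge-reversal symmetry, which fixes $c_3,t_4,c_4$ and merely swaps $W\leftrightarrow L$, so nothing involving only $c_3$ and $t_4$ can distinguish sources from sinks.) Consequently the source construction re-proves that the \emph{upper} envelope $\Phi$ is decreasing, but says nothing about the \emph{lower} envelope $\psi(c_3)=\min t_4$, which is what must be shown to be strictly decreasing for $\Cref{it:e}\iff\Cref{it:f}$. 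Worse, the shape of the argument cannot be salvaged by a different gadget: to show $\psi(c_3')>\psi(x)$ for $c_3'<x$ one needs to rule out \emph{all} tournaments with $c_3=c_3'$ and $t_4\le\psi(x)$, which is a non-existence statement that cannot follow from exhibiting a single construction. (A construction could show $\psi$ is decreasing by going in the opposite direction---increasing $c_3$ while decreasing $t_4$---but adjoining sources or sinks moves the wrong way, and it is far from clear what modification would increase $c_3$ while certifiably decreasing $t_4$ for an arbitrary extremal $T$; this is tied up with the hard open Conjecture 2.1 about the lower boundary itself.) So your write-up should not claim the min-triple is ``handled identically''; at best one should state the needed monotonicity of the lower envelope explicitly as an assumption, in the same spirit as the paper's own unproved assertion.

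Two small technical points to tighten even in the part that works: you should verify that $c_3$ and $t_4$ vary \emph{continuously} and strictly monotonically with the relative size $\alpha=|T|/(|T|+k)$ of the transitive block (the formulas $c_3(T\oplus T_k)=\alpha^3 c_3(T)$ and $t_4(T\oplus T_k)=1-\alpha^4(1-t_4(T))-4\alpha^3(1-\alpha)(1-t_3(T))$ do this, but this computation should be included); and when you write $\Phi^{-1}(y)=\max\{c_3: t_4=y \text{ achievable}\}$, the justification that the upper envelope agrees with the right envelope requires exactly the strict monotonicity you established, so this should be stated rather than left implicit.
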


To prove this proposition we need

\begin{observation}\label{obs:43} $t_4-c_4=1-4c_3$ \end{observation}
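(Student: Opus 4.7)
The plan is to establish two linear identities in the four densities $t_4,c_4,w,l$ and then subtract. The first identity is the trivial one: since every $4$-vertex induced subtournament is isomorphic to exactly one of $T_4, C_4, W, L$, we have
\[
t_4+c_4+w+l=1.
\]

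The second identity comes from double-counting pairs (cyclic $3$-subset, $4$-subset containing it) in an $n$-vertex tournament $T\in\cal T$. On the one hand, a random $3$-subset obtained by first choosing a random $4$-subset and then a random $3$-subset inside it is uniformly distributed among all $\binom{n}{3}$ triples, so the expected number of cyclic triples in a random $4$-subset equals $4c_3$ (each $4$-subset contains exactly $4$ triples). On the other hand, I will count directly how many of the four $3$-subsets of each $4$-vertex tournament type are cyclic. Using the standard formula that a triple is transitive iff it has a vertex with two out-neighbors inside it, together with the degree sequences of $T_4,C_4,W,L$, I would verify:
\begin{itemize}
\item $T_4$ contains $0$ cyclic triples;
\item $C_4$ contains $2$ cyclic triples;
\item each of $W$ and $L$ contains exactly $1$ cyclic triple.
\end{itemize}
This yields the identity
\[
2c_4+w+l \;=\; 4c_3.
\]

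Subtracting this from $t_4+c_4+w+l=1$ gives $t_4-c_4=1-4c_3$, as claimed. The only step that requires any real care is the per-tournament count of cyclic triples in $C_4,W,L$, and this is a short case check from the degree sequences, so I do not anticipate a genuine obstacle. Both identities are actually exact for every finite tournament, and passing to the limit along $\cal T$ (which exists by hypothesis) is automatic.
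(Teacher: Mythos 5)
Your proposal is correct and follows essentially the same route as the paper: count cyclic triples inside each $4$-vertex tournament type ($0,2,1,1$ for $T_4,C_4,W,L$), double-count to obtain $2c_4+w+l=4c_3$, and subtract from $t_4+c_4+w+l=1$.
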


\begin{proof}

We count cyclic triangles in $4$-vertex tournaments. There are none in $T_4$, two in $C_4$ and one each in $W$ and $L$. Therefore the number of cyclic triangles in an $n$-vertex tournament satisfies $c_3{n\choose 3}=\frac{2c_4+w+l}{n}{n\choose 4}$. The claim follows, since $t_4+c_4+w+l=1$.

\end{proof}

We can now prove Proposition~\ref{prop:equiv}.

\begin{proof}[Proof of Proposition~\ref{prop:equiv}]

Obviously, $t_3+c_3=1$. Combined with Observation~\ref{obs:43} this already proves that \Cref{it:a,it:b} and \Cref{it:d,it:e} are equivalent. To see that \Cref{it:e,it:f} are equivalent, note that \Cref{it:e} is equivalent to maximizing $t_3$ for given $t_4$, or, equivalently, to minimizing $c_3$ given $t_4$. The equivalence follows by Observation~\ref{obs:43}. A similar argument proves that \Cref{it:b,it:c} are equivalent.   

\end{proof}

\Cref{it:a,it:b,it:c} are rather straightforward and we proceed to solve them. \Cref{it:d,it:e,it:f} are deeper. By the equivalence proved above, the discussion is restricted to \cref{it:d}. We state a conjecture on the solution of this problem and prove a lower bound. This problem raises interesting structural limitations on tournaments, on which we elaborate in Section~\ref{sect:2}. We defer the technical proofs to Section~\ref{sect:proofs} and in Section~\ref{sect:further} we offer some further directions.

The three regions $\{(t_3,t_4)\},~\{(c_3,c_4)\},~\{(t_4,c_4)\}$ of the realizable pairs of parameters are illustrated in \Cref{fig:1,fig:2,fig:3}.

\begin{figure}

  \centering

  \includegraphics[width=160mm,height=100mm]{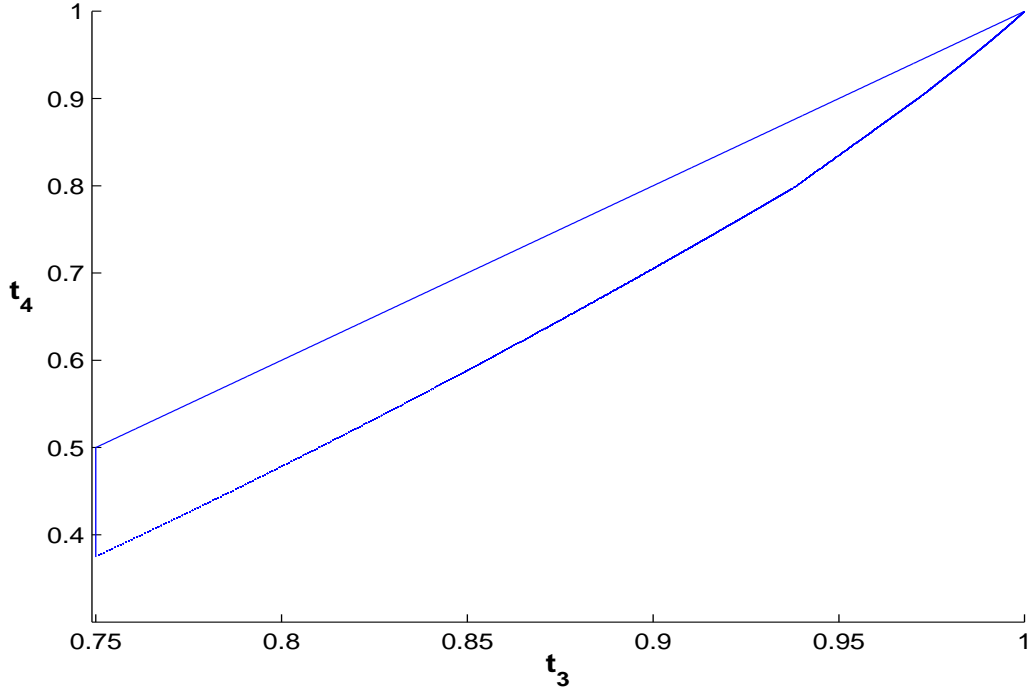}

  \caption{The boundary of the set $\{(t_3({\cal T}),t_4({\cal T}))\}$. The lower curve is conjectured.}

  \label{fig:1}

\end{figure}

\begin{figure}

  \centering

  \includegraphics[width=160mm,height=100mm]{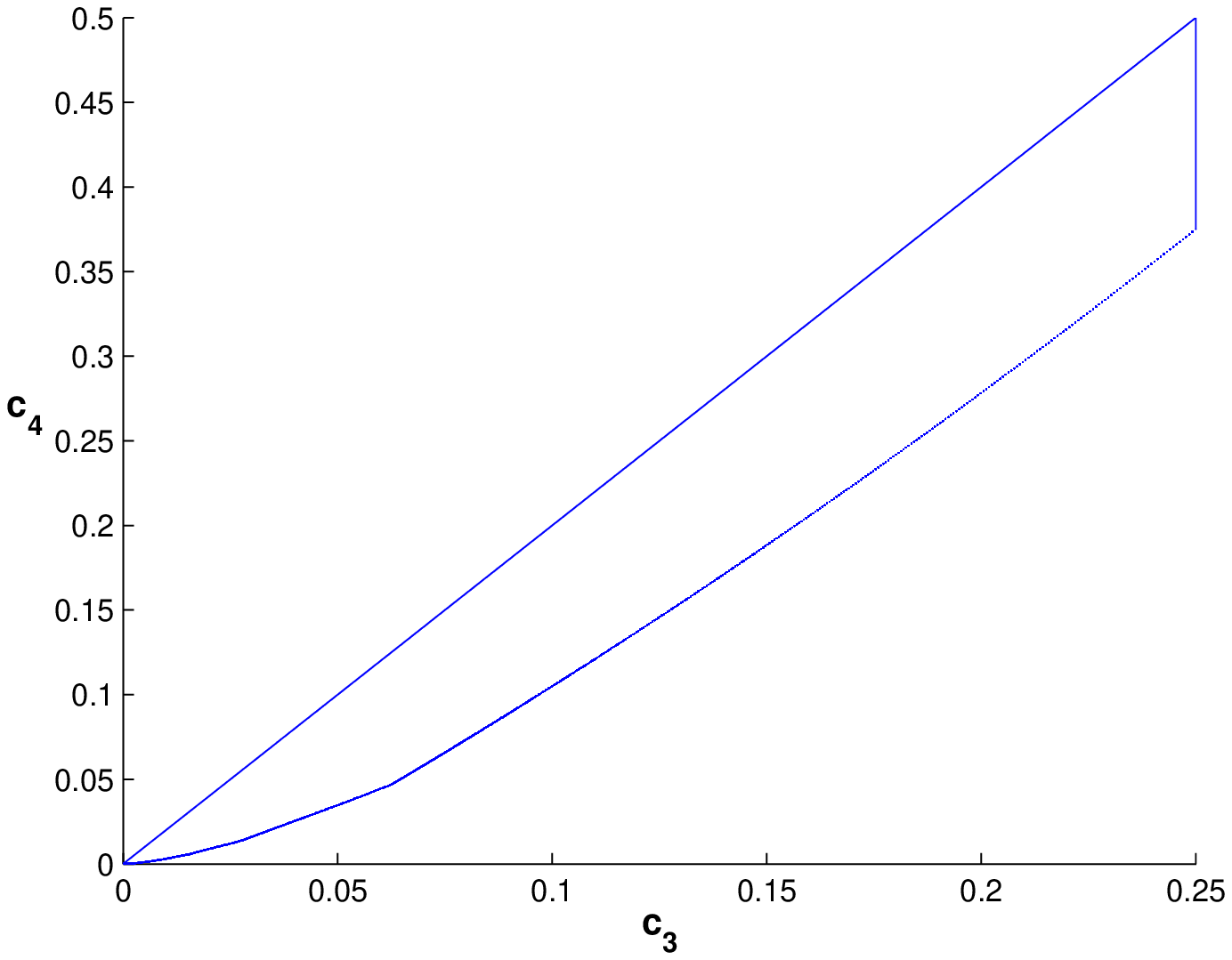}

  \caption{The boundary of the set $\{(c_3({\cal T}),c_4({\cal T}))\}$. The lower curve is conjectured.}

  \label{fig:2}

\end{figure}

\begin{figure}

  \centering

  \includegraphics[width=160mm,height=100mm]{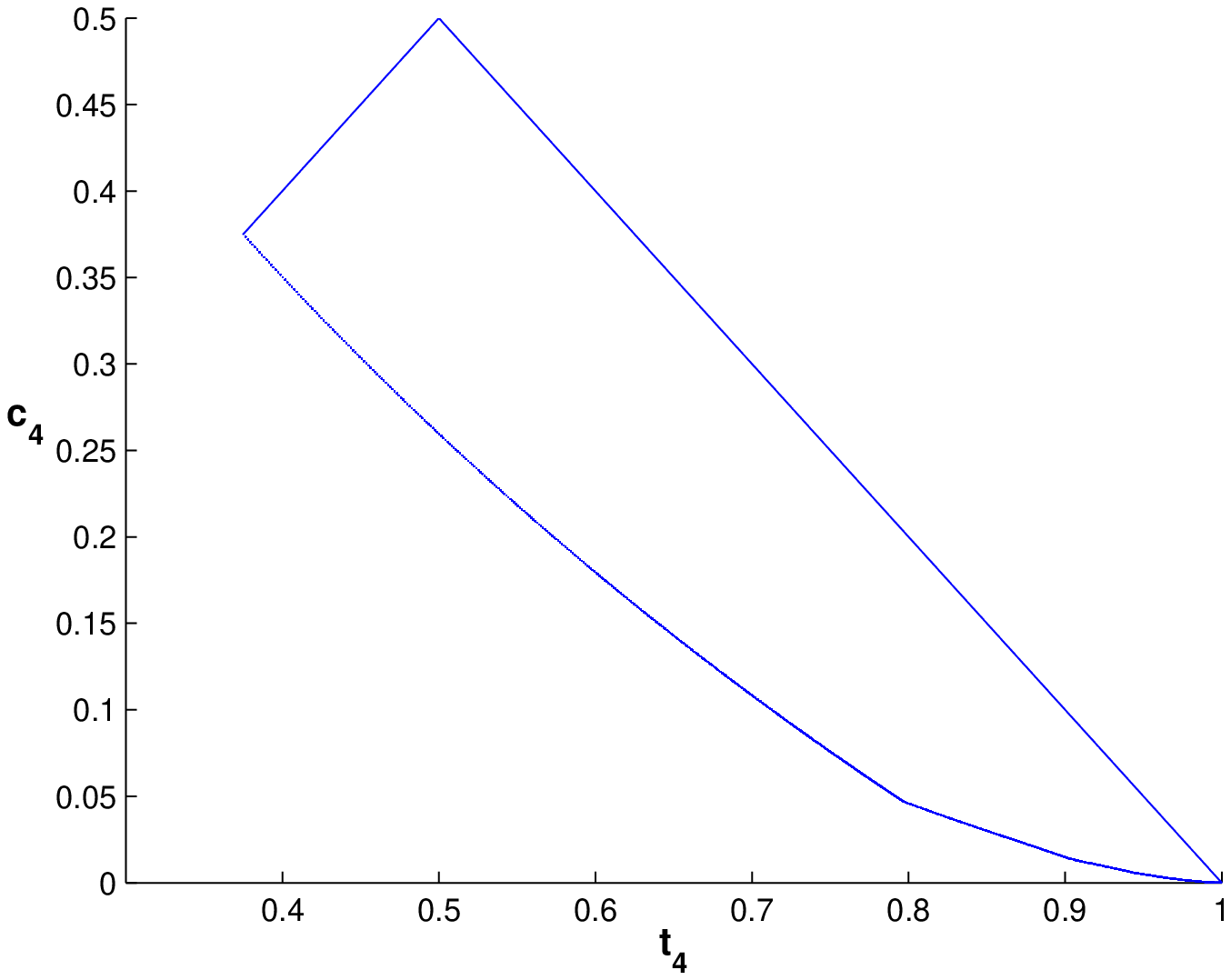}

  \caption{The boundary of the set $\{(t_4({\cal T}),c_4({\cal T}))\}$. The lower curve is conjectured.}

  \label{fig:3}

\end{figure}

We finally note that the planar sets $\{(t_3({\cal T}),t_4({\cal T}))\}$, $\{(c_3({\cal T}),c_4({\cal T}))\}$, $\{(t_4({\cal T}),c_4({\cal T}))\}$ are simply connected. This shows that these sets coincide with the bounded regions in \Cref{fig:1,fig:2,fig:3}.

\begin{lem}\label{lem:conv}

The set of all pairs $(c_3({\cal T}),c_4({\cal T}))$ is simply connected. Here ${\cal T}$ is an arbitrary family of tournaments for which these limits exist.

\end{lem}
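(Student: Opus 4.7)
The plan is to pass to the ``tournamenton'' limit framework---a measurable kernel $W\colon [0,1]^2\to[0,1]$ with $W(x,y)+W(y,x)=1$ almost everywhere---whose space $\mathcal{W}$ is convex and weakly compact, so that each attainable pair $(c_3,c_4)$ is realized as the image of some $W\in\mathcal{W}$ under the continuous map $\phi(W)=(c_3(W),c_4(W))$, where $c_3(W)$ and $c_4(W)$ are explicit polynomial integrals of $W$. Under this correspondence $S=\phi(\mathcal{W})$ is compact, and since $\mathcal{W}$ is convex and $\phi$ is continuous, $S$ is path-connected: any two $W_0,W_1\in\mathcal{W}$ are joined by the segment $W_t=(1-t)W_0+tW_1$, whose image under $\phi$ is a continuous arc in $S$.

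For simple connectedness I aim to prove that $S$ is \emph{vertically convex}: for every $v$ the slice $S_v=\{y:(v,y)\in S\}$ is a closed interval. This suffices because any bounded component $B$ of $\mathbb{R}^2\setminus S$ would lie on some vertical line $\{x=v\}$ with points of $\partial B\subseteq S$ both above and below $B$ on that line, forcing $S_v$ to omit an interior point while containing points on either side---contradicting being an interval. With no bounded components in $\mathbb{R}^2\setminus S$, the compact and path-connected set $S$ is the region bounded by its upper and lower $c_3$-boundary functions and is simply connected.

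To prove vertical convexity, fix $v$ and $(v,y_1),(v,y_2)\in S$ realized by $W_1,W_2\in\mathcal{W}$ with $y_1<y_2$, and let $y\in(y_1,y_2)$. I consider the two-parameter family $W_{s,t}=sW_1+tW_2+(1-s-t)W_0$ on the simplex $\Delta=\{(s,t):s,t\ge 0,\;s+t\le 1\}$, with an auxiliary $W_0$ chosen so that $c_3(W_0)\ne v$. The function $c_3(W_{s,t})$ is a cubic polynomial in $(s,t)$, equal to $v$ at the two vertices $(1,0)$ and $(0,1)$ and different from $v$ at $(0,0)$; its level set $Z=c_3^{-1}(v)\cap\Delta$ should therefore contain an arc $\gamma$ inside $\Delta$ joining $(1,0)$ to $(0,1)$. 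Applying $\phi$, the image $\phi(\gamma)$ is a connected subset of the line $\{x=v\}$ containing both $(v,y_1)$ and $(v,y_2)$; since connected subsets of a line are intervals, $\phi(\gamma)$ contains the full segment between these two points, in particular $(v,y)\in S$.

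The main obstacle is to justify the existence of the arc $\gamma$ in the previous paragraph: one must rule out the possibility that the cubic level set $\{c_3=v\}$ decomposes $\Delta$ so that $(1,0)$ and $(0,1)$ lie in distinct components of $Z$. This is a structural statement about the cubic on the simplex, and may require either a judicious choice of $W_0$ (so that $c_3(W_0)$ lies on a specific side of $v$, controlling the topology of $Z$) or else a richer, iterated interpolation scheme. Once the arc is secured the rest of the argument is routine planar topology.
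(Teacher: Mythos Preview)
Your reduction to vertical convexity is the same as the paper's, and the surrounding topological reasoning (compactness, path-connectedness, and the observation that a bounded complementary component would violate vertical convexity) is sound. The gap is exactly the one you flag yourself: for the convex-combination family $W_{s,t}=sW_1+tW_2+(1-s-t)W_0$ there is no general reason why the cubic level set $\{c_3=v\}\cap\Delta$ should contain an arc joining $(1,0)$ to $(0,1)$. A planar cubic can meet a triangle in several arcs, and without further control on the coefficients (which depend on $W_0,W_1,W_2$ in a nontrivial way) the two vertices could lie on distinct components of the level set. Saying that a ``judicious choice of $W_0$'' or an ``iterated scheme'' might help is not yet a proof; as it stands the argument does not establish vertical convexity.

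The paper sidesteps this difficulty with a different interpolation, not a convex combination of kernels. Given $W_1,W_2$ with $c_3(W_1)=c_3(W_2)=v$, it places $W_1$ on an $\alpha$-fraction of the vertex set, $W_2$ on the remaining $(1-\alpha)$-fraction, and makes every cross-edge independently point from the first block to the second with probability $p$. The key identity is
\[
c_3 \;=\; \alpha^3 v+(1-\alpha)^3 v+3\alpha(1-\alpha)\,p(1-p)+o(1),
\]
so choosing $p$ with $p(1-p)=v$ (possible because $v=c_3\le\tfrac14$) forces $c_3\equiv v$ for \emph{every} $\alpha\in[0,1]$. One is then left with a single continuous parameter $\alpha$ along which $c_4$ moves from $c_4(W_1)$ to $c_4(W_2)$, and the intermediate value theorem finishes the proof. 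In tournamenton language this is a block construction (rescaled copies of $W_1,W_2$ on the diagonal blocks, constant $p$ off-diagonal), not a pointwise convex combination; that is precisely what makes the $c_3$-level set a full segment rather than an unknown cubic curve.
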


The arguments introduced in Proposition~\ref{prop:equiv} yield the same conclusion for the sets $\{(t_3,t_4)\}$, $\{(t_4,c_4)\}$. We prove the lemma in Section~\ref{sect:proofs}.

\section{Results and conjectures}\label{sect:2}

\subsection{The maximum}

In this subsection we solve \Cref{it:a,it:b,it:c}:

\begin{observation}\label{obs1}

The following inequalities hold in all tournaments. These inequalities are tight.

\begin{itemize}

\item

$c_4\le 2 c_3$.

\item

$t_4\le 2 t_3 -1$.

\item

$c_4\le \min\{t_4,1-t_4\}$.

\end{itemize}

\end{observation}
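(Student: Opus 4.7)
The plan is to derive all three bounds from two elementary identities together with the classical upper bound on cyclic triples. The partition identity $t_4+c_4+w+l=1$ is trivial, and the counting identity $4c_3=2c_4+w+l$ is exactly the relation extracted in the proof of \Cref{obs:43}. Since $w,l\ge 0$, every bound will reduce to discarding these two non-negative terms and/or invoking $c_3\le 1/4$.

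For the first bullet, $4c_3=2c_4+w+l\ge 2c_4$, which is $c_4\le 2c_3$. The second bullet is a rewriting of the first: using $t_3=1-c_3$ and \Cref{obs:43}, one has $t_4=1-4c_3+c_4\le 1-4c_3+2c_3=1-2c_3=2t_3-1$. For the third bullet, the bound $c_4\le 1-t_4$ is immediate from the partition identity with $w,l\ge 0$, while $c_4\le t_4$ is, via \Cref{obs:43}, equivalent to $c_3\le 1/4$. This last fact is the standard maximum-cyclic-triples bound: $t_3$ equals the average of $\binom{d_v}{2}$ over vertices, which by convexity is minimized when every out-degree equals $(n-1)/2$, giving $t_3\ge 3/4$ asymptotically.

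For tightness I would exhibit two families. The transitive tournament already saturates every bound at its trivial endpoint. For non-degenerate tightness of the first two bullets and of $c_4\le 1-t_4$ I would use a balanced blow-up of $C_3$: every induced 4-vertex subtournament is either a $T_4$ (when the four vertices occupy at most two of the three classes) or a $C_4$ (when the split is $(2,1,1)$), so $w=l=0$, and a short count of the four intersection patterns $(4,0,0),(3,1,0),(2,2,0),(2,1,1)$ gives $c_3=2/9$, $c_4=4/9$, $t_4=5/9$, simultaneously saturating $c_4=2c_3$, $t_4=2t_3-1$, and $c_4=1-t_4$. For $c_4\le t_4$ I would instead take a quasi-random family (uniformly random tournaments suffice), where $c_3\to 1/4$ almost surely and hence $t_4=c_4$ by \Cref{obs:43}. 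There is no serious obstacle here: once the two core identities are in place, both the inequalities and the blow-up computation are short, and the only ingredient beyond these identities is the classical bound $c_3\le 1/4$.
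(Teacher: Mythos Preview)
Your derivation of the three inequalities is exactly the paper's: everything follows from $t_4+c_4+w+l=1$, $4c_3=2c_4+w+l$, $t_3+c_3=1$, and $c_3\le 1/4$.

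Where your proposal falls short is in tightness. You exhibit equality only at isolated parameter values (the transitive tournament; the balanced blow-up of $C_3$ with transitive parts, giving the single point $c_3=2/9$, $c_4=4/9$; the random tournament, giving the single point $t_4=c_4=3/8$). The paper, by contrast, constructs equality cases for \emph{every} value of the free parameter. Concretely, to get $w=l=0$ for each $c_3\in[0,1/4]$ it takes the tournament on $\{1,\dots,n\}$ with $x\to y$ iff $x<y\le x+s$, and lets $s$ range from $n/2$ (the cyclic tournament, $c_3\to 1/4$) to $n$ (transitive, $c_3=0$); a short case analysis shows no such tournament contains a $W$ or an $L$. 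For the remaining piece of the third bullet, $c_4=t_4$ with $t_4$ sweeping all of $[3/8,1/2]$, it flips each edge of the cyclic tournament independently with probability $p\in[0,1/2]$; a Goodman argument keeps $c_3\to 1/4$, so $c_4=t_4$ throughout, interpolating continuously between $1/2$ and $3/8$.

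This full-range tightness is the actual content of the observation in context: it is what solves \Cref{it:a,it:b,it:c} and produces the upper boundaries of \Cref{fig:1,fig:2,fig:3}. Your examples confirm the inequalities cannot be sharpened as stated, but they do not determine $\max\{c_4:c_3=\gamma\}$ for general $\gamma$, which is the intended meaning of ``tight'' here.
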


\begin{proof}

Clearly $t_4+c_4\le 1$, since $t_4+c_4+w+l=1$. Also, as we saw $t_4-c_4=1-4c_3$ and $c_3+t_3=1$. In addition it is well-known and easy to show that $c_3\le \frac 14$. All the inequalities follow. To show that the first two inequalities are tight, we construct (Section~\ref{sect:proofs}) tournaments with $w=l=0$ for all values of $c_3\le \frac 14$. This  shows as well the tightness of the third inequality when $t_4\ge \frac 12$. For $t_4\le \frac 12$ we need a different construction which satisfies $c_4=t_4$, which we also do in Section~\ref{sect:proofs}. 

\end{proof}

\subsection{The minimum}

\Cref{it:d,it:e,it:f} are more involved. We focus on \Cref{it:d}. To derive an upper bound for \Cref{it:d} we introduce the {\em random blow-up} of a $k$-vertex tournament $H$. Associated with $H$ and a probability vector $(w_1,\ldots,w_m)$ is an infinite family of tournaments ${\cal T}={\cal T}(H;w_1,\ldots,w_m)$ whose $n$-th member has vertex set $\bigcupdot \{V_i | i\in H\}$ where $|V_i| = \lfloor w_i n \rfloor$. If $(i\to j)\in E(H)$ then there is an edge $(u\to v)$ from every $u\in V_i$ to every $v\in V_j$. The subtournament on each $V_i$ is random. In the {\em balanced} case $w_1=w_2=\ldots=w_k=\frac 1m$, we use the shorthand ${\cal T}(H)$.

We can now state our conjecture.

\begin{conj}\label{conj:1}

The minimum of $c_4$, given $c_3$ is attained by a random blow-up of a transitive tournament $T_m$.\\

Lemma~\ref{lem:blowup} below says that among all such tournaments of given $c_3$, the smallest $c_4$ is attained by taking $m$ as small as possible and $w_1=w_2=\ldots=w_{m-1}\ge w_m$.

\end{conj}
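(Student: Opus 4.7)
The plan is to attack the conjecture in two stages: a variational analysis of extremizers in the space of tournament limits, followed by a structural rigidity argument identifying the extremizer as a blow-up of a transitive tournament $T_m$.

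First, I would pass to the limit. By compactness of the space of tournament limits (tournamons), the minimum of $c_4$ subject to $c_3=\alpha$ is attained by some measurable $\tau^\star:[0,1]^2\to[0,1]$ with $\tau^\star(x,y)+\tau^\star(y,x)=1$ a.e. Writing
\[
c_4(\tau) - \lambda\, c_3(\tau) = \int \tau(x,y)\, \bigl[\, G_4(x,y;\tau) - \lambda\, G_3(x,y;\tau) \,\bigr]\, dx\, dy,
\]
where $G_k(x,y;\tau)$ is the partial integral capturing the contribution of the edge $(x,y)$ to $c_k$, the Euler--Lagrange condition at $\tau^\star$ forces the orientation of almost every edge to be dictated by the sign of the bracket. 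A threshold argument applied to the resulting sign function should then force $\tau^\star$ to be a step function: vertices whose out-neighborhoods have identical statistics coalesce into a single part $V_i$, and between distinct parts all edges are oriented the same way. The upshot is that $\tau^\star$ is a blow-up of some finite tournament $H$.

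Second, I would argue that $H$ must be transitive. The heuristic is that if $H$ contains a cyclic triangle on parts $V_a,V_b,V_c$, then one can redistribute mass or re-randomize the internal tournaments on these parts so as to preserve $c_3$ while strictly decreasing $c_4$; the reason is that cyclic triangles in the skeleton generate $W$ and $L$ configurations with an additional fourth vertex at a higher rate than transitive triangles do, and $W+L$ contributes to $c_4$ through Observation~\ref{obs:43}. Once $H$ is known to be transitive, Lemma~\ref{lem:blowup} selects the optimal number of parts and the optimal weight vector, concluding the proof.

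The main obstacle is the first-stage rigidity claim: the Euler--Lagrange equations admit many spurious critical points. The quasirandom $\tau\equiv\tfrac{1}{2}$, for instance, is stationary for every moment functional by symmetry, yet it is only the trivial transitive blow-up of $T_1$; more worryingly, blow-ups of cyclic tournaments can also satisfy the stationarity identities. Ruling out non-transitive blow-ups as global (not merely local) minima is essentially the entire content of the conjecture. A complete resolution likely demands a stability upgrade of the flag-algebra semidefinite program used for the authors' lower bound: one must show not merely that the optimum of the SDP matches the conjectured value, but that any near-extremal tournament is close in cut distance to a transitive blow-up. Such uniqueness-of-extremizer refinements (in the spirit of Razborov--Nikiforov clique-density theorems) are known to be substantially harder than the matching inequalities themselves, and this is presumably why the authors obtain only an almost matching lower bound rather than a full proof.
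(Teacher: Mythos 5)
This statement is a \emph{conjecture}, and the paper offers no proof of it; the authors prove only the lower bound of Lemma~\ref{lem:1} (via a Cauchy--Schwarz argument informed by flag-algebra computations), which matches the conjectured value at $c_3=\tfrac14$ and $c_3=\tfrac{1}{16}$ and falls short elsewhere (see Figure~\ref{fig:4}). Your submission is, by your own candid admission, not a proof either, so there is no gap-free argument on either side to compare. Your high-level roadmap (compactness in the tournamon space, a variational stationarity condition, then a rigidity step forcing a transitive blow-up) is a sensible framing, but each stage as written leaves a genuine hole. The bang-bang reasoning in stage one does not, by itself, force $\tau^\star$ to be a finite step function: the sign of the Euler--Lagrange bracket could partition $[0,1]^2$ into infinitely many pieces or a non-measurable-looking stratification, and establishing finite step structure for extremizers of clique-type functionals is itself a hard theorem even in the undirected case. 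In stage two, the heuristic that cyclic triangles in the skeleton ``generate $W$ and $L$ at a higher rate'' and that this ``contributes to $c_4$ through Observation~\ref{obs:43}'' is imprecisely stated; what the observation actually gives is the exact identity $w+l=4c_3-2c_4$, so that minimizing $c_4$ at fixed $c_3$ is literally equivalent to maximizing $w+l$ --- a clean reformulation you could exploit more directly than a rate heuristic. Finally, as you correctly flag, distinguishing transitive blow-ups from cyclic blow-ups among stationary points is the crux of the problem, and the authors explicitly state they have no conceptual argument for it: that is precisely why this remains a conjecture in the paper rather than a theorem.
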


When $c_3=\frac{1}{4r^2}$ the random blow-up that minimizes $c_4$ is the balanced blow-up of $T_r$. It is conceivable that this case of the conjecture should be easier to handle. When $c_3=\frac 14$ and $m=1$ this reduces to the well-known fact that $t_4$ is minimized by a random tournament. (Recall that given $c_3$, minimization of $c_4$ and of $t_4$ are equivalent). In this article we settle as well the case $c_3=\frac{1}{16}$ and $m=2$.

Note that, if the conjecture is indeed true, then there is no simple expression for $\min c_4$ in terms of $c_3$.

\begin{rem}

The problem and the structure of the construction in Conjecture~\ref{conj:1} resemble a well studied problem in graph theory. The problem statement is as follows: For $r>s$ and a family of graphs with given $K_s$-density (that is the asymptotic probability for $s$ randomly chosen vertices to form a clique), how small can the $K_r$ density be?

The recent works of Razborov~\cite{raz2}, Nikiforov~\cite{nikiforov} and Reiher~\cite{reiher} solved the problem for $s=2$ (with $r=3$, $r=4$ and all $r\ge 5$, respectively). For all we know the problem is still open for $s=3$ and any $r$.

The optimal construction (Razborov~\cite{raz2}) for $s=2$, $r=3$ is obtained by blowup of complete graphs of the smallest possible size. The blowup weights are all equal except (maybe) one smaller part. There are no edges inside the blowup sets (unlike our random tournament inside each set).

\end{rem}

We reproduce a proof from~\cite{LM}, that we later (Lemma~\ref{lem:1}) improve.

\begin{prop}

$c_4\ge 6c_3^2$. 

\end{prop}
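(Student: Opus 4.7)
The plan is to lower bound $c_4$ via a second-moment / Jensen argument on the distribution of cyclic triangles across the edges of $T$. For each unordered pair of vertices $\{u,v\}$ in an $n$-vertex tournament $T$, let $N(u,v)$ denote the number of vertices $w$ such that $\{u,v,w\}$ spans a cyclic triangle.

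\textbf{First moment.} Summing over the $\binom{n}{2}$ edges, each cyclic triangle is counted once per edge, giving
\[
\sum_{\{u,v\}} N(u,v) \;=\; 3\, c_3 \binom{n}{3}.
\]

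\textbf{Second moment as a $C_4$ count.} The quantity $\binom{N(u,v)}{2}$ counts unordered pairs $\{w_1,w_2\}$ for which both $\{u,v,w_1\}$ and $\{u,v,w_2\}$ are cyclic triangles. The $4$-vertex set they span contains two distinct cyclic triangles, and by the enumeration underlying Observation~\ref{obs:43} (the types $T_4, C_4, W, L$ contain $0,2,1,1$ cyclic triangles respectively), this forces the $4$-set to induce a $C_4$. Conversely, the two cyclic triangles inside a $C_4$ share exactly two vertices, so each $C_4$ is counted exactly once in the sum. Hence
\[
\sum_{\{u,v\}} \binom{N(u,v)}{2} \;=\; c_4 \binom{n}{4}.
\]

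\textbf{Jensen.} Since $x \mapsto \binom{x}{2}$ is convex, averaging over the $\binom{n}{2}$ edges gives
\[
c_4 \binom{n}{4} \;\ge\; \binom{n}{2} \binom{\bar N}{2}, \qquad \bar N \;=\; \frac{3 c_3 \binom{n}{3}}{\binom{n}{2}} \;=\; c_3(n-2).
\]
Expanding the right side and passing to the limit $n \to \infty$ kills the lower-order terms and leaves $c_4 \ge 6\, c_3^2$.

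Structurally the argument is short; the only step that needs care is matching the second-moment sum to $c_4\binom{n}{4}$, which rests squarely on the fact that $C_4$ is the unique $4$-vertex tournament hosting two cyclic triangles. The bound is tight only when $N(u,v)$ is nearly constant across edges, a condition which fails badly for blow-ups of transitive tournaments; this looseness is presumably what Lemma~\ref{lem:1} exploits to sharpen the estimate.
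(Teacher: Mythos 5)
Your proof is correct and is essentially the same as the paper's: your $N(u,v)$ is, up to normalization, the paper's random variable $X$ (there $x_e$ is the probability that a random $w$ completes $uv$ to a cyclic triangle), the second-moment identity is the paper's observation that $C_4$ is exactly the $4$-vertex tournament with two cyclic triangles sharing an edge, and your convexity/Jensen step is just $\mathrm{Var}(X)\ge 0$ written combinatorially. The only cosmetic difference is that you work with integer counts and $\binom{\cdot}{2}$ while the paper normalizes to probabilities and uses $X^2$.
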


\begin{proof}

For an edge $e=uv$ in a tournament $T$, let $x_e$ be the probability that the triangle $uvw$ is cyclic when the vertex $w$ is selected uniformly at random. We define the random variable $X$ on $E(T)$ with uniform distribution that takes the value $x_e$ at $e\in E(T)$. Clearly $\mathbb{E} X = c_3+o_{|T|}(1)$. But a $4$-vertex tournament is isomorphic to $C_4$ iff it contains two cyclic triangles with a common edge. Consequently, $\mathbb{E}(X^2)=\frac{c_4}{6}+o_{|T|}(1)$. The proposition simply says that $Var(X)\ge 0$. \end{proof}

Consequently, our main problem is to find the smallest possible variance $Var(X)$ for given $\mathbb{E}(X)$. Conjecture~\ref{conj:1} and Lemma~\ref{lem:1} below are some quantitative forms of the assertion that when $0<c_3<\frac 14$, cyclic triangles cannot be uniformly distributed among the edges. We presently have no conceptual proof of this claim, and we must resort to flag algebra methods, which unfortunately offer no intuition as to the reason that this statement is true.

Here is another curious aspect of this problem. Define $\varphi_T(x):=\mathbf{pr}(X\ge x)$  and let $f:=\limsup_{|T|\to\infty}\varphi_T$. For all we know, $f$ may be discontinuous. To see this note that $f(\frac 13)\ge \frac 23$ where $\frac 23$ is the value that is attained by balanced blow-ups of $C_3$. We suspect that $f(\frac 13+)$ is strictly smaller than $\frac 23$. In fact, the best lower bound that we have is $f(\frac 13+)\ge \frac 49$ which is attained by an imbalanced blow-up of $C_3$ (e.g. a $(\frac 13+\epsilon, \frac 13+\epsilon, \frac 13-2\epsilon)$ blowup with $\epsilon\to 0^+$).

We turn next to apply Razborov's flag-algebra method~\cite{Raz} which yields a lower bound that is not far from the conjectured value. In particular, it proves Conjecture~\ref{conj:1} for $c_3=\frac 1{16}$.

\begin{lem}\label{lem:1}

$c_4\ge \frac{18c_3^2}{1+8c_3}$.

\end{lem}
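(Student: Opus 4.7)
The plan is to apply the flag-algebra method with the directed edge as the type, using a rank-one quadratic combination whose coefficient depends on $c_3$. First, I would fix an oriented edge $e = u\to v$ in the tournament $T$ and consider the four rooted $3$-vertex flags obtained by adjoining a vertex $w$: $F_1$ (both $u\to w$ and $v\to w$), $F_2$ (the transitive triple $u\to w\to v$, so that $e$ is the source-to-sink edge of this triple), $F_3$ (the cyclic triple $v\to w\to u$, so that $e$ lies in a cyclic triangle), and $F_4$ (both $w\to u$ and $w\to v$). Writing $X_i = X_i(e) = \mathbf{pr}_w[F_i]$ and viewing each $X_i$ as a random variable over a uniformly chosen $e\in E(T)$, standard double-counts yield, in the limit $|T|\to\infty$,
\[
\mathbb{E}[X_3] = c_3,\quad \mathbb{E}[X_2] = \tfrac{1-c_3}{3},\quad \mathbb{E}[X_2^2] = \tfrac{t_4}{6},\quad \mathbb{E}[X_3^2] = \mathbb{E}[X_2 X_3] = \tfrac{c_4}{6}.
\]
The coincidence $\mathbb{E}[X_3^2] = \mathbb{E}[X_2 X_3]$ is the key structural fact: on a pair of independent extensions of $e$, both the $(F_3,F_3)$ and the $(F_2,F_3)$ patterns force the resulting $4$-vertex subtournament to be a $C_4$, with $e$ playing a prescribed role in each case.

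The engine of the proof is the variance inequality $\mathrm{Var}(X_3 + \beta X_2)\geq 0$, applied with a scalar $\beta$ that we are free to choose as a function of $c_3$ (legitimate because $c_3$ is a constant once $T$ is fixed). Expanding the variance using the identities above and substituting $t_4 = c_4 + 1 - 4 c_3$ from Observation~\ref{obs:43}, this becomes
\[
c_4 (1+\beta)^2 + \beta^2(1-4c_3) \;\geq\; \tfrac{2}{3}\bigl(3c_3 + \beta(1-c_3)\bigr)^{\!2}.
\]
A short calculus check shows that, for fixed $c_3 \in (0,1/4]$, the right-hand side is maximised at $\beta^\ast = 6 c_3/(1 + 2 c_3)$. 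Substituting this choice and using the resulting simplifications $1+\beta^\ast = (1+8c_3)/(1+2c_3)$ and $3c_3 + \beta^\ast(1-c_3) = 9c_3/(1+2c_3)$, the inequality collapses after clearing a common factor of $(1+2c_3)^{-2}$ to $c_4 (1+8c_3) \geq 18 c_3^2$, which is the claim.

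The only real subtlety is recognising that the optimal $\beta$ must depend on $c_3$. Any single constant choice — for instance $\beta = 1/3$, motivated by the identity $3 X_3 + X_2 \equiv 1/2$ that holds on the balanced blow-up of $T_2$ — does produce a correct inequality, but one that is tight only at the isolated values $c_3 = 1/16$ and $c_3 = 1/4$ and is strictly weaker in between. The $c_3$-dependent choice $\beta^\ast(c_3)$ interpolates between these two points optimally: $\beta^\ast(1/16) = 1/3$ and $\beta^\ast(1/4) = 1$ single out precisely the two equality cases of the lemma, namely the balanced blow-up of $T_2$ and the random tournament, both of which are consistent with the extremal family in Conjecture~\ref{conj:1}.
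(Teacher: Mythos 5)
Your argument is correct and is essentially the paper's proof in a different wrapper: you use the same two edge-rooted random variables (your $X_3,X_2$ are the paper's $X,Y$), the same moment identities $\mathbb{E}[X_3^2]=\mathbb{E}[X_2X_3]=c_4/6$, $\mathbb{E}[X_2^2]=t_4/6$, and in both cases the certificate is positive-semidefiniteness of the Gram matrix of $\{1,X,Y\}$ restricted to the same extremal direction — indeed your optimal combination $X_3+\beta^\ast X_2-\mathbb{E}[\cdot]$ is proportional to the one for which the paper's Cauchy--Schwarz is tight. The only difference is presentational: the paper guesses the fixed auxiliary variable $Z=1+2(X-Y)$ and applies Cauchy--Schwarz to $(X,Z)$ once, which hides the parameter optimization you carry out explicitly via $\mathrm{Var}(X_3+\beta X_2)\ge 0$ with the $c_3$-dependent $\beta^\ast=6c_3/(1+2c_3)$.
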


\begin{cor}

If $c_3\ge \frac 1{16}$ then $c_4\ge \frac 3{64}$. In particular, the construction in Conjecture~\ref{conj:1} is optimal for $c_3=\frac 1{16}$.

\end{cor}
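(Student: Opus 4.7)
The plan is to deduce the corollary directly from Lemma~\ref{lem:1}: verify that the right-hand side $g(c) := \frac{18c^2}{1+8c}$ is monotone increasing in $c>0$, evaluate it at $c=\tfrac{1}{16}$, and then exhibit a matching construction for the ``in particular'' clause.

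A routine differentiation gives
\[
g'(c) \;=\; \frac{36c(1+8c) - 144c^2}{(1+8c)^2} \;=\; \frac{36c(1+4c)}{(1+8c)^2},
\]
which is strictly positive for $c>0$, so $g$ is increasing on $(0,\infty)$. Plugging in yields $g(\tfrac{1}{16}) = \tfrac{18/256}{3/2} = \tfrac{3}{64}$. Combined with Lemma~\ref{lem:1} this gives $c_4 \ge g(c_3) \ge g(\tfrac{1}{16}) = \tfrac{3}{64}$ whenever $c_3 \ge \tfrac{1}{16}$.

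For the second assertion, I would verify that the balanced random blow-up ${\cal T}(T_2)$---the construction predicted by Conjecture~\ref{conj:1} at $c_3 = \tfrac{1}{16}$---attains equality. Partition $V = V_1 \cup V_2$ with $|V_1|=|V_2|$, orient all edges from $V_1$ to $V_2$, and take independent uniform random tournaments inside $V_1$ and $V_2$. A triple that meets both parts contains a source or a sink and hence is not cyclic; a triple contained in one part is cyclic with probability $\tfrac14$. Since a random triple lies entirely in a given part with probability $(\tfrac12)^3$, this yields $c_3 = 2 \cdot \tfrac18 \cdot \tfrac14 = \tfrac{1}{16}$. The same reasoning shows that a $4$-subset meeting both parts cannot be $C_4$, since the bipartite edges force a source or a sink and so exclude any directed $4$-cycle. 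A $4$-subset contained entirely in one part is a uniform random $4$-tournament, and a direct count of the $2^6 = 64$ labeled $4$-vertex tournaments ($24$ of type $T_4$, $8$ of type $W$, $8$ of type $L$, leaving $24$ of type $C_4$) shows it is $C_4$ with probability $\tfrac{3}{8}$. Thus $c_4 = 2 \cdot (\tfrac12)^4 \cdot \tfrac38 = \tfrac{3}{64}$, matching the lower bound.

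There is no real obstacle here: once Lemma~\ref{lem:1} is granted, the corollary is a one-line monotonicity argument together with the short verification of $(c_3,c_4) = (\tfrac{1}{16}, \tfrac{3}{64})$ for ${\cal T}(T_2)$. If anything warrants attention it is only the bookkeeping of the $2^6$ labeled $4$-tournaments used to get $c_4 = \tfrac{3}{8}$ inside each part.
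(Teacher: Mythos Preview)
Your proof is correct and matches the paper's intended reasoning: the paper states the corollary immediately after Lemma~\ref{lem:1} without proof, treating it as a direct consequence of the monotonicity of $g(c)=\tfrac{18c^2}{1+8c}$ together with the construction values, which the paper records after Lemma~\ref{lem:blowup} as $c_3(\mathcal{T})=\tfrac14\sum w_i^3$ and $c_4(\mathcal{T})=\tfrac38\sum w_i^4$ (giving $(\tfrac{1}{16},\tfrac{3}{64})$ for $w_1=w_2=\tfrac12$). Your explicit verification of these values and of the score-sequence reason why no cross-part $4$-set can be $C_4$ is a correct and welcome elaboration of what the paper leaves implicit.
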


See ~\Cref{fig:4} for a comparison between the bound in Lemma~\ref{lem:1} and Conjecture~\ref{conj:1}.

Using available computer software, we were able to get further numerical evidence which indicates that Lemma~\ref{lem:1} is not tight for $c_3\neq 0,\frac 1{16},\frac 14$, and the true minimum of $c_4$ is closer to the conjectured value. The results are graphically presented in~\Cref{fig:4} and the method of computation is explained in Appendix~\ref{app:lbounds}.

\begin{figure}[!ht]

  \centering

  \includegraphics[width=160mm,height=100mm]{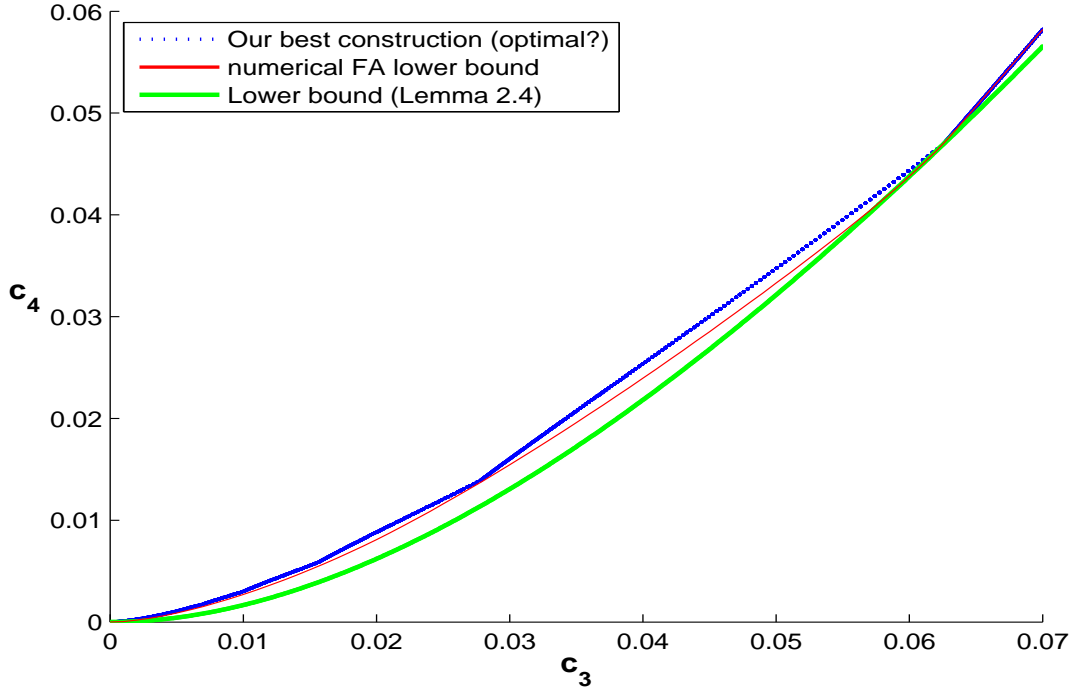}

  \caption{Lower bound from numerical application of flag algebras, compared with proven lower bound and our best construction, which is conjectured to be optimal. To improve visibility we present only the range $c_3\in [0,0.07]$. Numerical lower bound and the construction seem to coincide for $c_3\ge\frac 1{16}$.}

  \label{fig:4}

\end{figure}

Concluding this section, we formulate the following analytic lemma. It states that among all blow-ups considered in Conjecture~\ref{conj:1} the best one is a blow-up of a transitive tournament of least possible order, with equal vertex weights, except possibly one smaller weight.

\begin{lem}\label{lem:blowup}

Fix any $0<C<1$ and consider all probability vectors $w$ satisfying $\sum w_i^3=C$. The minimum of $\sum w_i^4$ among such vectors is attained by letting $w_1=\ldots=w_{m-1}\ge w_m>0$ with the smallest possible $m$.

\end{lem}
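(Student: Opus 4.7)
The plan is to first reduce to probability vectors supported on at most two distinct positive values, and then analyze the resulting discrete two-parameter family.

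For the reduction, suppose $w$ is a minimizer whose support contains three distinct positive values $a > b > c$. The perturbation $(s,t,u) = \lambda(b^2 - c^2,\; c^2 - a^2,\; a^2 - b^2)$ applied to one coordinate of each value is (up to scalar) the unique first-order tangent direction to the constraint manifold $\{\sum w_i = 1,\ \sum w_i^3 = C\}$ among perturbations of these three coordinates; it changes $\sum w_i^4$ by $4\lambda(a-b)(b-c)(a-c)(ab+bc+ca)$ to first order. Since the scalar factor is strictly positive, choosing the sign of $\lambda$ appropriately (together with an implicit function theorem correction to land back on the constraint manifold) yields a feasible strict decrease of $f := \sum w_i^4$, contradicting minimality. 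Zero coordinates being irrelevant, the minimizer has $p$ copies of some $a$ and $q$ copies of some $b$ with $a \ge b > 0$ and $p, q \in \mathbb{Z}_{\ge 1}$.

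For the analysis, treat $(p,q)$ as continuous reals at fixed $C$. Implicit differentiation of $pa + qb = 1$ and $pa^3 + qb^3 = C$ yields expressions for the derivatives of $a, b$ along the moves $(p,q) \mapsto (p+\epsilon, q-\epsilon)$ and $(p,q) \mapsto (p, q+\epsilon)$. After applying the polynomial identities
\[
2a^3 - 3a^2 b + b^3 = (a-b)^2(2a+b), \qquad a^4 + 2a^3 b - 6a^2 b^2 + 2ab^3 + b^4 = (a-b)^2(a^2 + 4ab + b^2),
\]
the corresponding derivatives of $f = pa^4 + qb^4$ collapse to
\[
-\frac{(a-b)^3(a^2+4ab+b^2)}{3(a+b)} \qquad \text{and} \qquad \frac{b^3(8a^2 - ab - b^2)}{3(a+b)},
\]
respectively, which are strictly negative and strictly positive on the feasible region (the latter because the quadratic $8a^2 - ab - b^2$ in $a$ has its sole positive root at $a = b(1+\sqrt{33})/16 < b$, so is positive for $a \ge b > 0$). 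Hence at fixed $C$, $f$ strictly decreases both when $p$ grows at the expense of $q$ and when $q$ shrinks at fixed $p$, provided the configuration stays in the feasible region $p < 1/\sqrt{C} \le p+q$.

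Setting $m := \lceil 1/\sqrt{C}\rceil$, any integer feasible $(p,q) \ne (m-1, 1)$ admits one of the feasibility-preserving unit moves $(p,q) \to (p, q-1)$ (requires $p + q \ge m+1$) or $(p,q) \to (p+1, q-1)$ (requires $p \le m-2$); a short case-check using the feasibility bounds $p \le m-1$ and $p+q \ge m$ shows at least one always applies, and each strictly decreases $f$ by the monotonicities above. Iterating brings $(p,q)$ to $(m-1, 1)$, exactly the $w_1 = \ldots = w_{m-1} \ge w_m > 0$ configuration, with $m$ the smallest value for which such a configuration realizes $\sum w_i^3 = C$ (the boundary $C = 1/k^2$ degenerating to the all-equal vector). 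The main technical obstacle is the derivative calculation: obtaining the two polynomial factorizations above and verifying that the resulting product forms have the claimed signs throughout the feasible region is delicate algebra; the support-reduction step and the final discrete iteration are both conceptually routine.
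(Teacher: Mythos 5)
Your proof is correct but takes a genuinely different route in its second half. Both arguments open the same way: you and the paper reduce to probability vectors with at most two distinct positive values via first-order optimality on the constraint manifold $\{\sum w_i = 1,\ \sum w_i^3 = C\}$ (the paper phrases this with Lagrange multipliers and the observation that $x^3 - \lambda x^2 - \mu$ has at most two positive roots; your tangent-direction perturbation $\lambda(b^2-c^2,\,c^2-a^2,\,a^2-b^2)$ with the factorization $4\lambda(a-b)(b-c)(a-c)(ab+bc+ca)$ is the same idea, and that factorization is correct). The paths diverge thereafter. The paper then exhibits, for any $(x,y,y)$ block with $x>y>0$, an explicit constraint-preserving replacement by $(s,t,0)$ or $(s,s,t)$ — with the case split governed by whether $y \le \frac{\sqrt5-1}{4}x$ — and certifies the decrease of $\sum w_i^4$ by brute-force polynomial expansion, including a rather formidable degree-8 identity ending in $4(x-y)^3(19y^5+73xy^4+98x^2y^3+54x^3y^2+9x^4y-x^5)$. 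You instead relax the multiplicities $(p,q)$ to continuous parameters, implicitly differentiate $pa+qb=1$, $pa^3+qb^3=C$, and find that $f = pa^4 + qb^4$ has constrained derivatives
\[
-\frac{(a-b)^3(a^2+4ab+b^2)}{3(a+b)} \quad\text{and}\quad \frac{b^3(8a^2-ab-b^2)}{3(a+b)}
\]
along $(p,q)\mapsto(p+\epsilon,q-\epsilon)$ and $(p,q)\mapsto(p,q+\epsilon)$ respectively; I checked both calculations and they are right, as is the sign analysis ($8a^2-ab-b^2>0$ throughout $a\ge b>0$). The concluding discrete descent to $(m-1,1)$ with $m=\lceil 1/\sqrt{C}\,\rceil$ is clean, and your case-check that one unit move is always available off the target point is correct. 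What each route buys: your derivative computation is more systematic and avoids the paper's two-case split and the heavy degree-8 verification, arriving directly at the full characterization $w_1=\ldots=w_{m-1}\ge w_m>0$ (the paper gets there by iterating "no value except the largest may repeat"). Minor things worth making explicit if you write this up: the justification that the perturbation can be corrected back onto the manifold requires the constraint gradients to be independent, which holds whenever not all $w_i$ are equal (exactly the case you need); and existence of a minimizer follows by compactness for each bounded support size $m'$, after which your descent shows the minimum is independent of $m'\ge m$.
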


The relevance of the lemma in the setting of Conjecture~\ref{conj:1}, is that $c_3({\cal T})=\frac 14 \sum w_i^3$, and $c_4({\cal T})=\frac 38 \sum w_i^4$ where ${\cal T}={\cal T}(T_m;w_1,\ldots,w_k)$.

\section{Proofs}\label{sect:proofs}

\begin{proof}[Proof of Lemma~\ref{lem:conv}]

We will show that the set $\{(c_3,c_4)\}$ is vertically convex. Let ${\cal T}_1, {\cal T}_2$ be two families with $c_3({\cal T}_1)=c_3({\cal T}_2)$ and $c_4({\cal T}_1)<c<c_4({\cal T}_2)$. We construct an $n$-vertex tournament $T$ with $c_3(T)=c_3({\cal T}_1)+o_n(1)$ and $c_4(T)=c+o_n(1)$. Let $0\le p,\alpha \le 1$ be two constant parameters. Choose $T_1\in{\cal T}_1$ on $\alpha n$ vertices (we can choose a random subtournament of a larger member if ${\cal T}_1$ has no member of this order). Let $T_2\in{\cal T}_2$ of order $(1-\alpha)n$. Let $T=T_1\cupdot T_2$, where for $x\in T_1$ and $y\in T_2$ there is an edge $x\to y$ with probability $p$ and $y\to x$ with probability $1-p$.

We compute $c_3(T)=\alpha^3 c_3(T_1) + (1-\alpha)^3 c_3(T_2) + 3\alpha(1-\alpha)p(1-p)+o(1)$. Choose $p$ such that $p(1-p)=c_3({\cal T}_1)=c_3({\cal T}_2)$ and then $c_3(T)=c_3({\cal T}_1)+o(1)$.

In computing $c_4(T)$, several terms come in, each up to $+o(1)$ error

\begin{itemize}

\item

$\alpha^4 c_4(T_1)$ for quadruples contained in $T_1$

\item

$(1-\alpha)^4 c_4(T_2)$ all in $T_2$

\item

$6\alpha^2(1-\alpha)^2(p(1-p)+2p^2(1-p)^2)$ two in each

\item

$4\alpha^3(1-\alpha)(c_3({\cal T}_1)3p(1-p)+(1-c_3({\cal T}_1))p(1-p))$ three in $T_1$ and one in $T_2$

\item

$4\alpha(1-\alpha)^3(c_3({\cal T}_2)3p(1-p)+(1-c_3({\cal T}_2))p(1-p))$ one and three.

\end{itemize}

Consequently $c_4(T)$ is expressed (up to an additive $o(1)$ term) as a degree four polynomial in $\alpha$ which for $1 \ge \alpha \ge 0$ takes every value between $c_4({\cal T}_1)$ and $c_4({\cal T}_2)$.

\end{proof}

\begin{proof}[Completing the proof of tightness in Observation~\ref{obs1}]

Let us recall the well-known {\em cyclic tournaments} (see e.g.,~\cite{LM}). Place an odd number of vertices equally spaced along a circle, and $x\to y$ is an edge if the shorter arc from $x$ to $y$ is clock-wise. We are now ready to construct tournaments with the desired parameters.

\begin{itemize}

\item Tournaments with arbitrary $0\le c_3\le \frac 14$, and $w=l=0$:\\

Fix some $\frac n2 \le s \le n$. Let $T$ be the tournament with vertex set $1,2,\ldots,n$, where $x\to y$  for $1\le x < y\le n$, iff $y\le x+s$. 

We claim that $w(T)=l(T)=0$. For suppose that $x\to y\to z\to x$ is a cyclic triangle in $T$ and there is some vertex $w$ with either $w\to x,y,z$ or $w\leftarrow x,y,z$. w.l.o.g. $x<y,z$ and it follows that $x<y\le x+s<z\le y+s$. If $w<x$, then $w\to x$ since $s\ge \frac{n}{2}$, but $z\to w$. Likewise we rule out the possibility that $w>z$, i.e., necessarily $x<w<z$. If $x<w<y$ then necessarily $x\to w\to y$. Likewise, $y<w<z$ implies $y\to w\to z$.

For $n\to\infty$ odd and $s=\frac{n}{2}$ this yields the cyclic tournaments and $c_3=\frac 14$. when $s=n$ we obtain transitive tournaments. As $s$ varies

we cover the whole range $0\le c_3\le \frac 14$.

\item For $t\in[\frac 38,\frac 12]$, we construct a family $\cal T$ with $t_4({\cal T})=c_4({\cal T})=t$ (recall that no family of tournaments can have $t_4<\frac 38$): \\

Fix some $0\le p\le \frac 12$. We construct $\cal T$ from the cyclic tournaments by flipping each edge independently with probability $p$. As we show below, $c_3({\cal T})=\frac 14$, so by Observation~\ref{obs:43}, $t_4({\cal T})=c_4({\cal T})$. When $p=0$ we have the cyclic tournament with $t_4=c_4=\frac 12$ and when $p=\frac 12$ we have a random tournament with $t_4=c_4=\frac 38$. The claim follows by continuity.

To see that $c_3({\cal T})=\frac{1}{4}$, note that almost surely all vertex outdegrees in $T\in{\cal T}$ equal $\frac n2 \pm o(n)$. The claim follow by a standard Goodman-type argument.

\qed\end{itemize}

\let\qed\relax\end{proof}

\begin{proof}[Proof of Lemma~\ref{lem:1}]

We define the random variables $X$ and $Y$ over $E(T)$ with uniform distribution.

For $e=\{v_1\to v_2\}\in E(T)$ we define:

\begin{itemize}

\item $X(e)$ is the probability that $\{v_1,v_2,v_3\}$ is a cyclic triangle in $T$, where the vertex $v_3$ is chosen uniformly at random.

\item $Y(e)$ the probability that $\{v_1\to v_3\}\in E(T)$ and $\{v_3\to v_2\}\in E(T)$, where the vertex $v_3$ is chosen uniformly at random.

\end{itemize}

It is not hard to verify the following expectations: $\mathbb{E}(X)=c_3$, $\mathbb{E}(Y)=\frac {t_3}3$. $\mathbb{E}(X^2)=\frac {c_4}6$, $\mathbb{E}(Y^2)=\frac{t_4}6$ and $\mathbb{E}(X\cdot Y)=\frac{c_4}6$.

We define $Z=1+2(X-Y)$ and conclude that $\mathbb{E}(Z^2)=\frac{1+8c_3}3$ and $\mathbb{E}(X\cdot Z)=c_3$. By Cauchy-Schwarz $c_3^2=\mathbb{E}^2(X\cdot Z)\le \mathbb{E}(X^2)\mathbb{E}(Z^2)=c_4\frac{1+8c_3}{18}$.

\end{proof}

\begin{rem}

Proper disclosure: The above derivation could not have been carried out without seeing what flag-algebra calculations yield. 

\end{rem}

\begin{proof}[Proof of Lemma~\ref{lem:blowup}]

We wish to minimize $\sum_1^m w_i^4$ under the constraints $\sum_1^m w_i=1$ and $\sum_1^m w_i^3=C$ (for given $m$). We assume that all $w_i$ are positive, since zero $w_i$'s can be removed with smaller $m$. A Lagrange multipliers calculation yields that $w_i^3=\lambda w_i^2+\mu$ for all $i$ and for some constants $\lambda$ and $\mu$. The cubic polynomial $x^3-\lambda x^2-\mu$ has at most two positive roots since the linear term in $x$ vanishes. Therefore the coordinates of the optimal $w$ must take at most two distinct values.

Assume towards contradiction that $x>y>0$ appear as coordinates in $w$ with $y$ repeated at least twice. We will replace three of $w$'s coordinates $(x,y,y)$ while preserving $\sum_1^m w_i$ and $\sum_1^m w_i^3$, and reducing $\sum_1^m w_i^4=C$.

We replace $(x,y,y)$ by either $(s,t,0)$ or $(s,s,t)$ where $s\ge t\ge 0$. 

First, if $y\le \frac{\sqrt{5}-1}{4}\cdot x$, we prove the existence of $s\ge t\ge0$ s.t.

\begin{itemize}

\item $x+2y=s+t$.

\item $x^3+2y^3=s^3+t^3$.

\item $x^4+2y^4>s^4+t^4$.

\end{itemize}

Substitute $t=x+2y-s$ in the second equation: $x^3+2y^3=s^3+(x+2y-s)^3$, which can be rewritten as $(x+2y)s^2-(x+2y)^2s+2y(x+y)^2=0$. This quadratic has real roots iff $D=(x+2y)^4-8y(x+y)^2(x+2y)\ge 0$ which holds iff $x\ge (1+\sqrt{5}) y$, the range that we consider. Moreover, when $D\ge 0$, both roots are positive, since the quadratic has a positive constant term and a negative linear term. This proves the existence of $s\ge t\ge 0$ satisfying the first two conditions.

The sum of the fourth powers of the roots of this quadratic is $s^4+t^4=\frac{(x+2y)^8+6(x+2y)^4D+D^2}{8(x+2y)^4}$. Thus, it suffices to show that $8(x^4+2y^4)(x+2y)^2>8(x+2y)^6-64y(x+2y)^3(x+y)^2+64y^2(x+y)^4$ which is easily verified by expanding all terms.

In the complementary range $x>y\ge \frac{\sqrt{5}-1}{4}\cdot x$ we find $s\ge t\ge0$ s.t.

\begin{itemize}

\item $x+2y=2s+t$.

\item $x^3+2y^3=2s^3+t^3$.

\item $x^4+2y^4>2s^4+t^4$.

\end{itemize}

We substitute $t=x+2y-2s$ in the second equation: $x^3+2y^3=2s^3+(x+2y-2s)^3$, or, equivalently, $0=s^3-2(x+2y)s^2+(x+2y)^2s-y(x+y)^2=(s-y)(s^2-2xs-3ys+(x+y)^2)$. Thus, $s^2-(2x+3y)s+(x+y)^2=0$, 

and $s=\frac{2x+3y-\sqrt{y(4x+5y)}}{2}>0$. Now $t=x+2y-2s\ge 0$ iff $y\ge \frac{\sqrt{5}-1}{4}\cdot x$. Clearly, $s\ge t$.

It remains to compute

$$x^4+2y^4-2s^4-t^4=(8x^3+50x^2y+86xy^2+45y^3)\sqrt{y(4x+5y)}-(2x^4+52x^3y+180x^2y^2+232xy^3+101y^4)$$ 

and show that this is positive. To this end we must prove that 

$$y(4x+5y)(8x^3+50x^2y+86xy^2+45y^3)^2-(2x^4+52x^3y+180x^2y^2+232xy^3+101y^4)^2>0.$$

This expression can be written as

$$4(x-y)^3(19y^5+73xy^4+98x^2y^3+54x^3y^2+9x^4y-x^5)$$

which is positive, since $9x^4y-x^5>0$.

\end{proof}

\section{Further Directions}\label{sect:further}

\begin{itemize}

\item

Many basic open questions on the local profiles of combinatorial objects are still open. Thus, it is still unknown whether the set of $k$-profiles of graphs is a simply connected set. Similar issues were already raised in the pioneering work of Erd\H{o}s, Lov\'asz and Spencer~\cite{ELS}, and remains open. The analogous question for tournaments is open as well. We do not know if the set of $k$-local profiles of tournaments is {\em convex}. We don't know it even for $k=4$, and we are not sure what the right guess is. As first observed in~\cite{ELS} the analogous question is answered negatively for graphs. On the other hand, for trees the answer is positive~\cite{bubeck:linial}.

\item

We recall the random variable $X$ - the fraction of cyclic triangles containing a randomly chosen edge. It would be desirable to give a direct proof that $\text{Var}(X)>0$ for all $0<c_3<\frac 14$.

\item

In Section~\ref{sect:2} we defined the function $f(x)=\limsup_{|T|\to\infty}\mathbf{pr}(X\ge x)$. What can be said about $f$? In particular, is it continuous? Is it continuous at $\frac 13$?

\item

The conjectured extreme construction for~\Cref{it:d} is particularly simple when $c_3=\frac{1}{4k^2}$ for integer $k$. We were able to settle this case for $k=1,2$. Thus, the first open case is $c_3=\frac 1{36}$.

\item

To what extent can the lower bound in Lemma~\ref{lem:1} be improved using higher order flags? In particular, Figure~\ref{fig:4} suggests that our construction is optimal for $c_3\ge\frac{1}{16}$. Can the optimum for this range be established using flags of order $6$?

\item

Here we have studied the set $\{(t_3({\cal T}),t_4({\cal T}))\}$. We would like to understand the relationships among higher $t_k$'s as well.

\item

Obviously, we would be interested in further describing the set of $4$-profiles of tournaments.

\item

The powerful method of flag algebras remains mysterious, and it would be desirable to have more transparent local methods. Lemma~\ref{lem:1} and the stronger Conjecture~\ref{conj:1} offer concrete challenges for such methods.

\item

Associated with every tournament $T$ is a $3$-uniform hypergraph whose faces are the cyclic triangles of $T$. This hypergraph clearly does not contain a $4$-vertex clique and this was used in~\cite{er-haj:72} to deduce a lower bound on some hypergraph Ramsey numbers. We wonder about additional structural properties of such $3$-uniform hypergraphs. Specifically,

\begin{itemize}

\item

Can such hypergraphs be recognized in polynomial time?

\end{itemize}

\item

Lemma~\ref{lem:blowup} is the case $p=3, q=4$ of the following natural sounding question. Find the smallest $q$-norm among all probability vectors $w$ of given $p$-norm, where $q>p\ge 2$ are integers. Is it true that all optimal vectors have the form $w_1=\ldots=w_{m-1}\ge w_m$, with the least possible $m$? Clearly our method of proof is too ad-hoc to apply in general.

\end{itemize}

\section{Acknowledgement}

We are grateful to Yuval Peled for many useful discussions about flag algebras and for his help with the computer calculations described in Appendix~\ref{app:lbounds}. Thanks are also due to Gideon Schechtman for discussions concerning the proof of Lemma~\ref{lem:blowup}.

\bibliographystyle{amsplain}

\appendix

\section{Computer generated lower bounds}\label{app:lbounds}

We have used the flag algebra method as explained in Section $4$ of~\cite{yuval}. Using flags of size $3$ over the (only) type of order $2$ yields Lemma~\ref{lem:1}. Using flags of size $4$ over the same type we get a $16\times 16$ PSD matrix whose entries are bilinear expressions in the coordinates of a large tournament's $6$-profile. We used the cvx SDP-solver~\cite{cvx} to obtain the results presented in~\Cref{fig:4}. Working with larger flags may clearly yield better estimates but limited computational resources have stopped from reaching beyond size $4$-flags.

\end{document}